\newcommand{\be}{\begin{equation}}
\newcommand{\ee}{\end{equation}}
\newcommand{\bea}{\begin{eqnarray}}
\newcommand{\eea}{\end{eqnarray}}
\newcommand{\beas}{\begin{eqnarray*}}
\newcommand{\eeas}{\end{eqnarray*}}
\newtheorem{theorem}{Theorem}[section]
\newtheorem{definition}[theorem]{Definition}
\newtheorem{proposition}[theorem]{Proposition}
\newtheorem{remark}[theorem]{Remark}
\newtheorem{example}[theorem]{Example}
\newtheorem{examples}[theorem]{Examples}
\newtheorem{foo}[theorem]{Remarks}
\newenvironment{proof}{\addvspace{\medskipamount}\par\noindent{\it Proof}.}
{\unskip\nobreak\hfill$\Box$\par\addvspace{\medskipamount}}
\newcommand{\abs}[1]{\left|#1\right|}     % absolute value
\DeclareMathOperator{\esssup}{ess\,sup}
\begin{document}
\title{Existence of solution to scalar BSDEs \\with weakly $L^{1+}$-integrable terminal values}

\author{Ying Hu\thanks{IRMAR,
Universit\'e Rennes 1, Campus de Beaulieu, 35042 Rennes Cedex, France (ying.hu@univ-rennes1.fr) 
and School of
Mathematical Sciences, Fudan University, Shanghai 200433, China.
Partially supported by Lebesgue center of mathematics ``Investissements d'avenir"
program - ANR-11-LABX-0020-01,  by ANR CAESARS - ANR-15-CE05-0024 and by ANR MFG - ANR-16-CE40-0015-01.} \and
Shanjian Tang\thanks{Department of Finance and Control Sciences, School of
Mathematical Sciences, Fudan University, Shanghai 200433, China (e-mail: sjtang@fudan.edu.cn).
Partially supported by National Science Foundation of China (Grant No. 11631004)
and Science and Technology Commission of Shanghai Municipality (Grant No. 14XD1400400).   }}

\maketitle

{\bf Abstract.} In this paper, we study a scalar linearly growing BSDE with a weakly $L^{1+}$-integrable terminal value. We prove that the BSDE admits a solution
if the terminal value satisfies some $\Psi$-integrability condition, which is weaker than the usual $L^p$ ($p>1$) integrability and stronger than $L\log L$ integrability.  We show by a counterexample that $L\log L$ integrability is not sufficient for the existence of solution
to a BSDE of a linearly growing generator.

{\bf AMS Subject Classification}: 60H10

{\bf Key Words} Backward stochastic differential equation, weak integrability,
terminal condition, dual representation.
\section{Introduction}

Consider the following Backward Stochastic Differential Equation (BSDE):
\begin{equation}\label{EDSR}
Y_t=\xi+\int_t^T f(s,Y_s,Z_s)ds-\int_t^T Z_sdW_s,
\end{equation}
where the function $f:[0,T] \times \Omega \times \mathbb{R} \times \mathbb{R}^{1\times d} \rightarrow \mathbb{R}$ satisfies
$$|f(s,y,z)|\le \alpha_s +\beta |y|+\gamma |z|,$$
with $\alpha\in L^1(0,T),\beta\ge 0$ and $\gamma>0$.

It is well known that if $\xi\in L^p$, with $p>1$ , then there exists a solution to BSDE (\ref{EDSR}), see e.g.
\cite{PP90,EPQ97, BDHPS03}.
The aim of this paper is to find a weaker integrability condition for the terminal value $\xi$, under which the solution still exists .

Set
$$\Psi_\lambda(x)=xe^{(\frac{2}{\lambda}\log(x+1))^{1/2}}, \quad (\lambda,x)\in (0,\infty)\times [0,、\infty).$$

Our sufficient condition is: there exists $\lambda\in (0,\frac{1}{\gamma^2{T}})$ such that
$$\mathbb E[\Psi_\lambda(|\xi|)]<+\infty.$$

\begin{remark}
Note that the preceding $\Psi_\lambda$-integrability is stronger than $L^1$, 
weaker than $L^p$ for any $p>1$, because for any $\varepsilon>0$, we have,
$$x\le xe^{(\frac{2}{\lambda}\log(x+1))^{1/2}}\le xe^{ {\varepsilon}\log(x+1)+\frac{1}{2\varepsilon\lambda}}
\le e^{\frac{1}{2\varepsilon\lambda}} x(x+1)^\varepsilon,\quad x\ge 0.$$
Moreover, for any $p\ge 1$, there exists a constant $C_p>0$ such that
$$xe^{(\frac{2}{\lambda}\log(x+1))^{1/2}}\ge C_p x\log^p(x+1).$$
We will see  that even the condition $$\mathbb E[\Psi_\lambda(|\xi|)]<+\infty， $$ for a certain
$\lambda>\frac{1}{\gamma^2{T}}$ (which implies that $|\xi|\log^p(|\xi|+1)\in L^1$) is still too weak to ensure the existence of solution by giving a simple example in Example \ref{example}.
\end{remark}

Note that if the generator $f$ is of sublinear growth with respect to $z$, i.e. there exists $q\in [0,1)$,
$$|f(t,y,z)|\le \alpha+\beta |y|+\gamma |z|^q,$$
then there exists a solution for $\xi\in L^1$, see \cite{BDHPS03}.

Our method applies the dual representation of solution to BSDE with convex generator (see, e.g. \cite{EPQ97, T06, DHR11}) in order to establish some a priori estimate and then the localization procedure
of real-valued BSDE \cite{BH06}.

Let us close this introduction by giving the notations that we will use in all the paper. For the remaining of the paper, let us fix a nonnegative real number $T>0$. First of all, $(W_t)_{t \in [0,T]}$ is a standard Brownian motion with values in $\mathbb{R}^d$ defined on some complete probability space $(\Omega,\mathcal{F},\mathbb{P})$. $(\mathcal{F}_t)_{t \ge 0}$ is the natural filtration of the Brownian motion $W$ augmented by the $\mathbb{P}$-null sets of $\mathcal{F}$. The sigma-field of predictable subsets of $[0,T] \times \Omega$ is denoted by $\mathcal{P}$.

Consider real valued BSDEs which are equations of type (\ref{EDSR}), where $f$ (hereafter called the generator) is a random function $[0,T] \times \Omega \times \mathbb{R} \times \mathbb{R}^{1\times d} \rightarrow \mathbb{R}$ and measurable with respect to $\mathcal{P} \otimes \mathcal{B}(\mathbb{R}) \otimes \mathcal{B}(\mathbb{R}^{1\times d})$, and $\xi$ (hereafter called the terminal condition or terminal value) is a real $\mathcal{F}_T$-measurable random variable.

\begin{definition}
By a solution to  BSDE (\ref{EDSR}), we mean a pair $(Y_t,Z_t)_{t \in [0,T]}$ of predictable processes with values in $\mathbb{R} \times \mathbb{R}^{1\times d}$ such that $\mathbb{P}$-a.s., $t \mapsto Y_t$ is continuous, $t \mapsto Z_t$ belongs to $L^2(0,T)$ and $t \mapsto g(t,Y_t,Z_t)$ belongs to $L^1(0,T)$, and $\mathbb{P}$-a.s. $(Y,Z)$ verifies  (\ref{EDSR}). 
\end{definition}

By BSDE ($\xi$,$f$), we mean the BSDE of generator $f$ and terminal condition $\xi$.

For any real $p\ge 1$, $\mathcal{S}^p$ denotes the set of real-valued, adapted and c\`adl\`ag processes $(Y_t)_{t \in [0,T]}$ such that
$$||Y||_{\mathcal{S}^p}:=\mathbb{E} \left[\sup_{0 \leq t\leq T} \abs{Y_t}^p \right]^{1/p} < + \infty,$$
and ${\cal M}^p$ denotes the set of (equivalent class of) predictable processes $(Z_t)_{t \in [0,T]}$ with values in $\mathbb{R}^{1 \times d}$ such that
$$||Z||_{{\cal M}^p}:=\mathbb{E}\left[\left(\int_0^T \abs{Z_s}^2 ds \right)^{p/2}\right]^{1/p} < +\infty.$$
%Finally, we will use the notation $Y^*:=\sup_{0\leq t\leq T} \abs{Y_t}$ and we recall that $Y$ belongs to the class (D) as soon as the family $\set{Y_{\tau}: \tau\leq T \textrm{ stopping time}}$ is uniformly integrable.

The rest of the paper is organized as follows. Section 2 establishes a necessary and sufficient condition for the
existence of solution to BSDE (\ref{EDSR}) for the typical form of generator $f(t,y,z)=\alpha_t+\beta y+\gamma |z|$. Section 3 gives the $\Psi_\lambda$ integrability condition for the
existence of solution to BSDE (\ref{EDSR}) for  $f(t,y,z)=\alpha_t+\beta y+\gamma |z|$. Section 4 is devoted to the  sufficiency of the $\Psi_\lambda$-integrability condition for 
the existence of solution to BSDE (\ref{EDSR}) of the general linearly growing generator. 

\section{Typical Case}
Let us first consider the following BSDE:
\begin{equation}\label{bsdez}
Y_t=\xi+\int_t^T (\alpha_s+\beta Y_s+\gamma |Z_s|)ds-\int_t^T Z_sdW_s,
\end{equation}
where $\alpha\in L^1(0,T)$, and $\beta\ge 0$ and $\gamma>0$ are some real constants.
We suppose further that the terminal condition $\xi$ is nonnegative. Note that if $Y$ is a solution belonging to class D, then 
as $e^{\beta t}Y_t$ is a local supermartingale, it is a supermatingale, from which we deduce that
$Y\ge 0$. In this subsection, we restrict ourselves to nonnegative solution.

For $\xi\in L^p$ ($p>1$),  BSDE (\ref{bsdez}) has a unique solution. It has a dual representation as follows (see, e.g. \cite{EPQ97,DHR11})
\begin{equation}\label{dual}
Y_t=\mathop{\esssup}_{q\in {\cal A}} \{ \mathbb E_q[e^{\beta(T-t)}\xi|{\cal F}_t]\}+\int_t^Te^{\beta(s-t)}\alpha_sds,
\end{equation}
where ${\cal A}$ is the set of progressively measurable processes $q$ such that $|q|\le \gamma$,
$$\frac{d\mathbb Q^q}{d\mathbb P}=M^q_T,$$
with 
$$M^q_t=\exp\{\int_0^t q_sdW_s-\frac{1}{2}\int_0^t|q_s|^2ds\}, \quad t\in [0,T],$$
and $\mathbb E_q$ is the expectation with respect to $\mathbb Q^q$.

\begin{theorem}\label{CNS}
Let us suppose that $\xi\ge 0$. Then BSDE (\ref{bsdez}) admits a solution $(Y,Z)$ such that $Y\ge 0$ if and only if there exists a locally bounded process $\bar{Y}$
such that 
$$\mathop{\esssup}_{q\in {\cal A}} \{ \mathbb E_q[e^{\beta(T-t)}\xi|{\cal F}_t]\}+\int_t^Te^{\beta(s-t)}\alpha_sds\le \bar{Y}_t.$$
\end{theorem}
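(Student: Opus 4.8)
The plan is to prove the two directions of the equivalence separately, with the ``only if'' part being essentially immediate and the ``if'' part requiring a localization argument.

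\medskip\noindent\textbf{Necessity.} Suppose $(Y,Z)$ is a solution with $Y\ge 0$. The key observation is that the dual representation \eqref{dual} should still be available as an \emph{inequality} for any nonnegative solution, not merely for the $L^p$ theory. Concretely, for each $q\in\mathcal A$, the process $M^q$ is a true martingale (since $q$ is bounded), so under $\mathbb Q^q$ the Girsanov-rotated Brownian motion $W^q_t:=W_t-\int_0^t q_s\,ds$ is a Brownian motion, and rewriting \eqref{bsdez} gives
\[
e^{\beta s}Y_s=e^{\beta t}Y_t\Big|_{\text{shift}}\ \text{more precisely}\ \ Y_t=\mathbb E_q\!\Big[e^{\beta(T-t)}\xi+\int_t^T e^{\beta(s-t)}\alpha_s\,ds+\int_t^T e^{\beta(s-t)}(\gamma|Z_s|-q_sZ_s)\,ds\,\Big|\,\mathcal F_t\Big]
\]
once one checks the $\mathbb Q^q$-local-martingale part is a genuine $\mathbb Q^q$-martingale along a localizing sequence and passes to the limit using $Y\ge 0$, Fatou, and the positivity of $\gamma|Z_s|-q_sZ_s$. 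Since $\gamma|Z_s|-q_sZ_s\ge 0$, we get $Y_t\ge \mathbb E_q[e^{\beta(T-t)}\xi\,|\,\mathcal F_t]+\int_t^T e^{\beta(s-t)}\alpha_s\,ds$ for every $q$, hence $Y_t\ge \esssup_{q\in\mathcal A}\mathbb E_q[e^{\beta(T-t)}\xi|\mathcal F_t]+\int_t^T e^{\beta(s-t)}\alpha_s\,ds$. As $Y$ is a.s.\ continuous it is locally bounded, so $\bar Y:=Y$ works. The care needed here — justifying the conditional expectation identity for a merely $L^1$-type solution — is the first technical point.

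\medskip\noindent\textbf{Sufficiency.} Assume such a locally bounded $\bar Y$ exists. Write $V_t:=\esssup_{q\in\mathcal A}\mathbb E_q[e^{\beta(T-t)}\xi|\mathcal F_t]+\int_t^T e^{\beta(s-t)}\alpha_s\,ds$, a candidate for $Y$. The plan is the standard truncation/localization scheme of \cite{BH06}: for $n\ge 1$ set $\xi^n:=\xi\wedge n$, which lies in every $L^p$, so BSDE $(\xi^n,f)$ with $f(s,y,z)=\alpha_s+\beta y+\gamma|z|$ has a unique solution $(Y^n,Z^n)$ admitting the dual representation \eqref{dual} with $\xi$ replaced by $\xi^n$. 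Comparison (monotonicity of the dual representation in the terminal value, or the comparison theorem for BSDEs) gives $0\le Y^n\le Y^{n+1}\le V\le \bar Y$, so $Y^n\uparrow Y_\infty$ pointwise for some process $Y_\infty\le\bar Y$ that is locally bounded. One then shows $(Y_\infty,\cdot)$ — together with a suitable limit $Z$ of $Z^n$ — solves BSDE $(\xi,f)$: this requires (i) a uniform (local) estimate on $\int_0^T|Z^n_s|^2\,ds$ and on the martingale parts, obtained by applying Itô to $|Y^n|^2$ or to a smooth truncation after stopping at the times $\tau_k$ where $\bar Y$ exceeds level $k$, using $0\le Y^n\le\bar Y\le k$ on $[0,\tau_k]$ and the linear growth to bound $\int\!|Z^n|^2$ by a constant depending on $k$ and $\|\alpha\|_{L^1}$; (ii) passing to the limit in the stopped equations, extracting a weakly convergent subsequence of $Z^n$ in $\mathcal M^2([0,\tau_k])$, upgrading to strong convergence via the quadratic-variation bound, and identifying the limit; (iii) sending $k\to\infty$ using that $\tau_k\uparrow T$ (because $\bar Y$ is locally bounded hence a.s.\ finite on $[0,T)$, with the continuity of $Y_\infty\le\bar Y$ and a lower bound handled by $Y^n\ge0$), and checking $Y_t\to\xi$ as $t\to T$, for which one uses $Y^n_T=\xi^n\uparrow\xi$ together with the monotone limit and a uniform-integrability or dominated argument furnished by the bound $Y\le\bar Y$.

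\medskip\noindent The main obstacle I expect is step (ii)--(iii) of sufficiency: getting the a priori $\mathcal M^2$-estimate that is \emph{uniform in $n$} on each stochastic interval $[0,\tau_k]$ and then correctly passing to the limit to recover a genuine solution on all of $[0,T]$, in particular verifying the terminal condition $Y_{T^-}=\xi$ and the continuity of $Y$ at $T$ when $\xi$ is only weakly integrable. This is exactly where the locally bounded dominating process $\bar Y$ is indispensable, since it simultaneously controls the supremum of $Y^n$ on $[0,\tau_k]$ (feeding the energy estimate) and guarantees $\tau_k\uparrow T$. The necessity direction, by contrast, is short once the dual inequality for general nonnegative solutions is established.
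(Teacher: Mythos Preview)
Your proposal is correct and follows essentially the same route as the paper: necessity via Girsanov, localization by $\sigma_n$, and Fatou to obtain $Y_t\ge\esssup_q\mathbb E_q[e^{\beta(T-t)}\xi\,|\,\mathcal F_t]+\int_t^T e^{\beta(s-t)}\alpha_s\,ds$, so $\bar Y:=Y$; sufficiency via truncating $\xi$, bounding the approximating solutions $Y^n$ uniformly by $\bar Y$ through the dual representation, localizing at the times $\tau_k$ where $\bar Y$ exceeds $k$, and passing to the limit in the stopped equations. The only cosmetic difference is that the paper delegates your steps (i)--(ii) to the monotone stability result for BSDEs with bounded data (Lemma~3 in \cite{BH06}) rather than spelling out the energy estimate and weak-compactness argument by hand.
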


\begin{proof} If BSDE (\ref{bsdez}) admits a solution $(Y,Z)$ such that $Y\ge 0$, then we define a sequence of stopping times
$$\sigma_n=T\wedge\inf\{t\ge 0: |Y_t|>n\},$$
with the convention that $\inf\emptyset=+\infty$.

As $W_s^q=W_s-\int_0^s q_rdr$ is a Brownian motion under $Q^q$, 
we have
$$
Y_{t\wedge\sigma_n}=Y_{\sigma_n}+\int_{t\wedge\sigma_n}^{\sigma_n} (\alpha_s+\beta Y_s+\gamma |Z_s|-Z_sq_s)ds-\int_{t\wedge\sigma_n}^{\sigma_n}Z_sdW_s^q.
   $$
Applying It\^o's formula to $e^{\beta s}Y_s$, we deduce 
$$e^{\beta (t\wedge\sigma_n)}Y_{t\wedge\sigma_n}=e^{\beta\sigma_n}Y_{\sigma_n}+\int_{t\wedge\sigma_n}^{\sigma_n}
e^{\beta s}(\alpha_s+\gamma |Z_s|-Z_sq_s)ds-\int_{t\wedge\sigma_n}^{\sigma_n}e^{\beta s}Z_sdW_s^q,$$
from which we obtain
$$ \mathbb E_q[e^{\beta(\sigma_n-t\wedge\sigma_n)}Y_{\sigma_n}+\int_{t\wedge\sigma_n}^{\sigma_n}e^{\beta(s-t\wedge\sigma_n)}\alpha_sds|{\cal F}_t]\le Y_{t\wedge\sigma_n}.$$
Fatou's lemma yields that 
$$ \mathbb E_q[e^{\beta(T-t)}\xi|{\cal F}_t]+\int_t^Te^{\beta(s-t)}\alpha_sds\le Y_t.$$

On the other hand, if 
there exists a locally bounded process $\bar{Y}$
such that 
$$\mathop{\esssup}_{q\in {\cal A}} \{ \mathbb E_q[e^{\beta(T-t)}\xi|{\cal F}_t]\}+\int_t^Te^{\beta(s-t)}\alpha_sds\le \bar{Y}_t,$$
then we construct the solution by use of a localization method (see e.g. \cite{BH06}). We describe this method here for completeness.
 Let 
$(Y^n,Z^n)$ be the unique solution in ${\cal S}^2\times {\cal M}^2$ of the following BSDE
$$Y^n_t=\xi{\bf 1}_{\{\xi\le n\}}+\int_t^T (\alpha_s+\beta Y^n_s+\gamma|Z^n_s|)ds-\int_t^T Z^n_sdW_s.$$
By comparison theorem, $Y^n$ is nondecreasing with respect to $n$. Moreover, 
setting $q_s^n=\gamma     \ {\mbox sgn}(Z_s^n)$, we obtain
\begin{eqnarray*}
Y^n_t&=& \mathbb E_{q^n}[e^{\beta(T-t)}\xi{\bf 1}_{\{\xi\le n\}}|{\cal F}_t]+\int_t^Te^{\beta(s-t)}\alpha_sds\\
&\le & \mathbb E_{q^n}[e^{\beta(T-t)}\xi|{\cal F}_t]\}+\int_t^Te^{\beta(s-t)}\alpha_sds\\
&\le& \bar{Y}_t.
\end{eqnarray*}

Set
$$\tau_k=T\wedge \inf\{t\ge 0: \bar{Y}_t>k\},$$
and 
$$Y_k^n(t)=Y_{t\wedge \tau_k}^n,\quad Z_k^n(t)=Z_t^n{\bf 1}_{t\le\tau_k}.$$
Then $(Y_k^n,Z_k^n)$ satisfies
\begin{equation}\label{bsdekn}
Y_k^n(t)=Y_k^n(T)+\int_t^T{\bf 1}_{s\le\tau_k} (\alpha_s+\beta Y_k^n(s)+\gamma |Z_k^n(s)|)ds-\int_t^T Z_k^n(s)dW_s.
\end{equation}

For fixed $k$, $Y_k^n$ is nondecreasing with respect to $n$ and remains bounded by $k$.
We can now apply the stability property of BSDE with bounded terminal data (see e.g.
 Lemma 3, page 611 in \cite{BH06}).
Setting $Y_k(t)=\sup_n Y_k^n(t)$, there exists $Z_k$ such that $\lim_n Z_k^n=Z_k$ in ${\cal M}^2$ and
\begin{equation}\label{bsdek}
Y_k(t)=\sup_n Y_{\tau_k}^n+\int_t^{\tau_k} (\alpha_s+\beta Y_k(s)+\gamma |Z_k(s)|)ds-\int_t^{\tau_k} Z_k(s)dW_s.
\end{equation}

Finally, noting that
$$Y_{k+1}(t\wedge\tau_k)=Y_k(t\wedge\tau_k),\quad Z_{k+1}{\bf 1}_{t\le\tau_k}=Z_k{\bf 1}_{t\le\tau_k},$$
we conclude the existence of solution $(Y,Z)$.

\end{proof}

\begin{remark} Consider the case $d=1$.
If BSDE (\ref{bsdez}) admits a solution $(Y,Z)$ such that $Y\ge 0$, by taking $q=\gamma$ and $q=-\gamma$, we 
deduce that both $\xi e^{\gamma W_T}$ and $\xi e^{-\gamma W_T}$ are in $L^1(\Omega)$, which implies that $\xi e^{\gamma |W_T|}\in L^1(\Omega)$, as
$$\xi e^{\gamma |W_T|}\le \xi e^{\gamma W_T}+\xi e^{-\gamma W_T}.$$
\end{remark}

\begin{example}\label{example}
Let us set $d=1$, $T=1$, $\beta=0$, $\gamma=1$,  $\mu\in (0,1)$, and
$$\xi=e^{\frac{1}{2}W_1^2-\mu |W_1|+\frac{1}{2}\mu^2}-1.$$ 
In this case, BSDE (\ref{bsdez}) does not admit a solution $(Y,Z)$ such that $Y\ge 0$,
as $\xi e^{|W_1|}$ does not belong to $L^1(\Omega)$ by the following direct calculus:
$$\mathbb E[\xi e^{|W_1|}]=\frac{1}{\sqrt{2\pi}}\int_{-\infty}^{+\infty}(e^{\frac{1}{2}|x|^2-\mu |x|+\frac{1}{2}\mu^2}-1)e^{|x|}e^{-\frac{1}{2}|x|^2}dx=+\infty.$$
Whereas it is straightforward to see that
for any $p\ge 1$,
$\xi\log^p(\xi+1) \in L^1(\Omega)$.

For $\lambda>1$, consider the following terminal condition 
%$\Psi_\lambda(\xi)\in L^1$, while BSDE (\ref{bsdez}) with
$$\xi=e^{\frac{1}{2}W_1^2-\mu |W_1|+\frac{1}{2}\mu^2}-1, \quad \mu\in ({\frac{1}{\sqrt{\lambda}}},1).$$
We have $\Psi_\lambda(\xi)\in L^1$ by the following straightforward  calculus:
$$\mathbb E[\Psi_\lambda(\xi)]=\frac{1}{\sqrt{2\pi}}\int_{-\infty}^{+\infty}(e^{\frac{1}{2}|x|^2-\mu |x|+\frac{1}{2}\mu^2}-1)e^{{\frac{1}{\sqrt{\lambda}}}\big||x|-\mu\big|}e^{-\frac{1}{2}|x|^2}dx<+\infty                   ,$$
while BSDE (\ref{bsdez})
has no solution, in view of the fact that $\xi e^{|W_1|}$ does not belong to $L^1(\Omega)$.
\end{example}

\section{Sufficient Condition}
Let us now look for a sufficient condition for the existence of a locally bounded process $\bar{Y}$
such that $$\mathop{\esssup}_{q\in {\cal A}} \{ \mathbb E_q[e^{\beta(T-t)}\xi|{\cal F}_t]\}+\int_t^Te^{\beta(s-t)}\alpha_sds\le \bar{Y}_t.$$

For $\lambda>0$, define the functions $\Phi_\lambda$ and $\Psi_\lambda$:
$$\Phi_\lambda(x)=e^{\frac{1}{2}\lambda \log^2(x)},\quad x>0,$$ 
$$\Psi_\lambda(y)=ye^{(\frac{2}{\lambda}\log(y+1))^{1/2}}, \quad y\ge 0.$$
Then we have
%Let us begin with two  propositions,
%where $0<\lambda<\frac{1}{\gamma^2{T}}$,
%$$\Phi(e^x)=e^{\frac{1}{2}\lambda x^2}.$$

\begin{proposition} For any $x\in \mathbb R$ and $y\ge 0$, we have
$$e^x y\le \Phi_\lambda(e^x)+e^{\frac{2}{\lambda}}\Psi_\lambda(y).$$
\end{proposition}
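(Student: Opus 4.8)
Since $\Phi_\lambda(e^x)=e^{\frac12\lambda x^2}$, the assertion is equivalent to the Young-type estimate
$$e^xy\le e^{\frac12\lambda x^2}+e^{2/\lambda}\Psi_\lambda(y),\qquad x\in\mathbb R,\ y\ge 0,$$
and the plan is to prove it by a dichotomy on the size of $x$ relative to the threshold
$$\theta(y):=\frac{2}{\lambda}+\Big(\frac{2}{\lambda}\log(y+1)\Big)^{1/2},$$
which is well defined because $\log(y+1)\ge 0$ for $y\ge 0$. When $y=0$ both sides of the target inequality vanish, so one may assume $y>0$.

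First, for $x\le\theta(y)$ I would use only the second term on the right-hand side: from $e^x\le e^{\theta(y)}=e^{2/\lambda}e^{(\frac2\lambda\log(y+1))^{1/2}}$ one obtains, after multiplying by $y>0$,
$$e^xy\le e^{2/\lambda}\,y\,e^{(\frac2\lambda\log(y+1))^{1/2}}=e^{2/\lambda}\Psi_\lambda(y)\le e^{\frac12\lambda x^2}+e^{2/\lambda}\Psi_\lambda(y).$$
Second, for $x>\theta(y)$ I would instead bound $e^xy$ by the first term, i.e. show $e^xy\le e^{\frac12\lambda x^2}=\Phi_\lambda(e^x)$, equivalently $x+\log y\le\frac12\lambda x^2$. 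Here $x>\theta(y)\ge 2/\lambda>0$ and $x-\frac2\lambda>(\frac2\lambda\log(y+1))^{1/2}\ge 0$, so squaring (both sides being nonnegative) gives
$$\log(y+1)<\frac{\lambda}{2}\Big(x-\frac{2}{\lambda}\Big)^2=\frac12\lambda x^2-2x+\frac{2}{\lambda}\le\frac12\lambda x^2-x,$$
the last inequality using $x>2/\lambda$. Since $\log y\le\log(y+1)$ this yields $x+\log y<\frac12\lambda x^2$, hence $e^xy<e^{\frac12\lambda x^2}$, and adding the nonnegative term $e^{2/\lambda}\Psi_\lambda(y)$ closes this case.

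I do not expect a serious obstacle once the threshold is correctly chosen; the only real design choice is the form of $\theta$. The additive constant $2/\lambda$ inside $\theta(y)$ is precisely what is needed: in the second regime it makes the linear term $-2x$ produced by squaring dominate enough to leave a clean $-x$, while in the first regime it produces exactly the harmless prefactor $e^{2/\lambda}$ in front of $\Psi_\lambda(y)$. One could alternatively avoid the case split by proving $\sup_{x\in\mathbb R}\big(e^xy-e^{\frac12\lambda x^2}\big)\le e^{2/\lambda}\Psi_\lambda(y)$ through first-order optimality conditions, but this seems to cost more computation than the dichotomy above.
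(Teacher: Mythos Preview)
Your proof is correct and follows essentially the same strategy as the paper: after the substitution $z=(\tfrac{2}{\lambda}\log(y+1))^{1/2}$, the paper splits according to the sign of $z+\tfrac{2}{\lambda}-x$, which is exactly your dichotomy at $\theta(y)=\tfrac{2}{\lambda}+z$. In the easy regime both arguments amount to $e^xy\le e^{2/\lambda}\Psi_\lambda(y)$; in the other regime the paper expands the whole expression and completes the square about $x-\tfrac{1}{\lambda}$, whereas you work directly with $(x-\tfrac{2}{\lambda})^2$ to get $e^xy\le e^{\frac12\lambda x^2}$, which is a slightly cleaner bookkeeping of the same computation. One minor slip: when $y=0$ the right-hand side does not vanish (it equals $e^{\frac12\lambda x^2}>0$); only the left-hand side does, so the inequality is still immediate.
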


\begin{proof}
Set 
$$z=(\frac{2}{\lambda}\log(y+1))^{1/2}\ge 0,$$
then
$$ y=e^{\frac{\lambda}{2}z^2}-1.$$
It is sufficient to prove that for any $x\in \mathbb R$ and $z\ge 0$,
$$e^{\frac{1}{2}\lambda x^2-x}+(e^{\frac{\lambda}{2}z^2}-1)(e^{z+\frac{2}{\lambda}-x}-1)\ge 0.$$
It is evident to see that the above inequality holds when $z+\frac{2}{\lambda}-x\ge 0$.

Consider the case $z+\frac{2}{\lambda}-x< 0$. Then $x>z+\frac{2}{\lambda}>0$.
Hence
\begin{eqnarray*}
& &e^{\frac{1}{2}\lambda x^2-x}+(e^{\frac{\lambda}{2}z^2}-1)(e^{z+\frac{2}{\lambda}-x}-1)\\
&=&e^{\frac{1}{2}\lambda (x-\frac{1}{\lambda})^2-\frac{1}{2\lambda}}+e^{\frac{\lambda}{2}z^2+z+\frac{2}{\lambda}-x}
+1-e^{z+\frac{2}{\lambda}-x}-e^{\frac{\lambda}{2}z^2}\\
&\ge& e^{\frac{1}{2}\lambda (z+\frac{1}{\lambda})^2-\frac{1}{2\lambda}}-e^{\frac{\lambda}{2}z^2}\\
&\ge & 0.
\end{eqnarray*}
\end{proof}

\begin{proposition} Let $0<\lambda<\frac{1}{\gamma^2{T}}$. For any $q\in {\cal A}$, 
$$\mathbb E[\Phi_\lambda(e^{\int_t^T q_sdW_s})|{\cal F}_t]\le \frac{1}{\sqrt{1-\lambda\gamma^2(T-t)}}.$$
\end{proposition}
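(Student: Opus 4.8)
The plan is to compute the conditional expectation of $\Phi_\lambda(e^{\int_t^T q_s\,dW_s})$ by recognizing that, conditionally on $\mathcal F_t$, the random variable $X:=\int_t^T q_s\,dW_s$ is (by the occupation-time structure) dominated in a suitable stochastic sense by a centered Gaussian with variance at most $\gamma^2(T-t)$, since $|q_s|\le\gamma$. Concretely, $\Phi_\lambda(e^X)=e^{\frac{\lambda}{2}X^2}$, so the quantity to estimate is $\mathbb E[e^{\frac{\lambda}{2}(\int_t^T q_s\,dW_s)^2}\mid\mathcal F_t]$. First I would linearize the quadratic exponent: for every real $u$ one has $e^{\frac{\lambda}{2}x^2}=\frac{1}{\sqrt{2\pi}}\int_{\mathbb R}e^{\sqrt{\lambda}\,xu-\frac{1}{2}u^2}\,du$ (the standard Gaussian representation of $e^{\frac{\lambda}{2}x^2}$ valid because $\lambda>0$). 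Substituting $x=\int_t^T q_s\,dW_s$ and using Fubini (justified a posteriori by positivity, Tonelli),
$$
\mathbb E\Big[e^{\frac{\lambda}{2}(\int_t^T q_s\,dW_s)^2}\,\Big|\,\mathcal F_t\Big]
=\frac{1}{\sqrt{2\pi}}\int_{\mathbb R}\mathbb E\Big[e^{\sqrt{\lambda}\,u\int_t^T q_s\,dW_s}\,\Big|\,\mathcal F_t\Big]e^{-\frac{1}{2}u^2}\,du.
$$

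Next I would bound the inner conditional expectation. Since $\int_t^T(\sqrt{\lambda}\,u\,q_s)\,dW_s$ is a stochastic integral with bounded integrand, the exponential martingale inequality gives
$$
\mathbb E\Big[e^{\sqrt{\lambda}\,u\int_t^T q_s\,dW_s}\,\Big|\,\mathcal F_t\Big]
=\mathbb E\Big[M^{\sqrt\lambda u q}_T/M^{\sqrt\lambda u q}_t\cdot e^{\frac{\lambda u^2}{2}\int_t^T|q_s|^2\,ds}\,\Big|\,\mathcal F_t\Big]
\le e^{\frac{\lambda\gamma^2 u^2(T-t)}{2}},
$$
because $M^{\sqrt\lambda u q}$ is a true martingale (bounded integrand, Novikov) and $\int_t^T|q_s|^2\,ds\le\gamma^2(T-t)$. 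Plugging this back in,
$$
\mathbb E\Big[e^{\frac{\lambda}{2}(\int_t^T q_s\,dW_s)^2}\,\Big|\,\mathcal F_t\Big]
\le\frac{1}{\sqrt{2\pi}}\int_{\mathbb R}e^{\frac{\lambda\gamma^2(T-t)}{2}u^2-\frac{1}{2}u^2}\,du
=\frac{1}{\sqrt{2\pi}}\int_{\mathbb R}e^{-\frac{1}{2}(1-\lambda\gamma^2(T-t))u^2}\,du
=\frac{1}{\sqrt{1-\lambda\gamma^2(T-t)}},
$$
where the Gaussian integral converges precisely because $\lambda<\frac{1}{\gamma^2 T}$ forces $1-\lambda\gamma^2(T-t)>0$ for all $t\in[0,T]$.

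The main obstacle is technical rather than conceptual: making the interchange of conditional expectation and the $du$-integral fully rigorous, and ensuring the martingale property of $M^{\sqrt\lambda u q}$ uniformly in $u$. Both are handled by positivity of all integrands (so Tonelli applies without an integrability check) and by the boundedness of $q$ (so Novikov's criterion holds for each fixed $u$). An equivalent route that sidesteps the $du$-integral slightly is to introduce an auxiliary independent standard normal $G$ on an enlarged space and write $e^{\frac{\lambda}{2}x^2}=\mathbb E_G[e^{\sqrt\lambda x G}]$, then apply conditional Fubini and the same exponential-martingale bound; I would present whichever is cleaner. Either way the decisive inequality $\int_t^T|q_s|^2\,ds\le\gamma^2(T-t)$ is what converts the constraint $|q|\le\gamma$ into the stated bound.
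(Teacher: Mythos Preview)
Your proof is correct and follows essentially the same route as the paper: the Gaussian linearization $e^{\frac{\lambda}{2}x^2}=\frac{1}{\sqrt{2\pi}}\int_{\mathbb R}e^{\sqrt{\lambda}\,xu-\frac12 u^2}\,du$, followed by the exponential-martingale bound $\mathbb E[e^{\theta\int_t^T q_s\,dW_s}\mid\mathcal F_t]\le e^{\frac12\theta^2\gamma^2(T-t)}$ and the final Gaussian integral. Your additional remarks on Tonelli and Novikov simply make explicit the routine justifications that the paper leaves implicit.
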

\begin{proof}
Firstly, by use of Girsanov's lemma, for $\theta\in \mathbb R$,
\begin{eqnarray*}
& &\mathbb E[e^{\theta \int_t^T q_sdW_s}|{\cal F}_t]\\
&=&\mathbb E[e^{\theta \int_t^T q_sdW_s-\frac{\theta^2}{2}\int_t^T|q_s|^2ds}e^{\frac{\theta^2}{2}\int_t^T|q_s|^2ds}|{\cal F}_t]\\
&\le&e^{\frac{\theta^2\gamma^2}{2}(T-t)}.
\end{eqnarray*}

Then we apply
$$e^{\lambda\frac{x^2}{2}}=\frac{1}{\sqrt{2\pi}}\int_{-\infty}^{+\infty} e^{\sqrt{\lambda} yx-\frac{y^2}{2}}dy$$
to deduce that
\begin{eqnarray*}
& &\mathbb E\big[\Phi_\lambda(e^{\int_t^T q_sdW_s})\big|{\cal F}_t\big]\\
&=&\mathbb E\big[e^{\frac{\lambda}{2}(\int_t^T q_sdW_s)^2}\big|{\cal F}_t\big]\\
&=& \frac{1}{\sqrt{2\pi}}\int_{-\infty}^{+\infty} \mathbb E [e^{\sqrt{\lambda} y\int_t^T q_sdW_s-\frac{y^2}{2}}|{\cal F}_t]dy\\
&\le& \frac{1}{\sqrt{2\pi}}\int_{-\infty}^{+\infty}  e^{\frac{(\sqrt{\lambda} y\gamma)^2}{2}(T-t)-\frac{y^2}{2}}dy\\
&=&\frac{1}{\sqrt{1-\lambda\gamma^2(T-t)}}.
\end{eqnarray*}
\end{proof}

Applying the above two propositions, we deduce the following sufficient condition.

\begin{theorem}\label{simple} Let us suppose that there exists $\lambda\in (0,\frac{1}{\gamma^2{T}})$ such that
$$\mathbb E[\Psi_\lambda(\xi)]<+\infty.$$
Then 
\begin{equation}\label{estimateY}
\mathop{\esssup}_{q\in {\cal A}} \{ \mathbb E_q[e^{\beta(T-t)}\xi|{\cal F}_t]\}+\int_t^Te^{\beta(s-t)}\alpha_sds\le \bar{Y}_t,
\end{equation}
with
$$\bar{Y}_t=e^{\beta(T-t)}\left(\frac{1}{\sqrt{1-\lambda\gamma^2(T-t)}}+e^{\frac{2}{\lambda}}\mathbb E[\Psi_\lambda(\xi)|{\cal F}_t]\right)+\int_t^Te^{\beta(s-t)}\alpha_sds,$$
and (\ref{bsdez}) admits a solution $(Y,Z)$ such that
$$Y_t\le {\bar Y}_t.$$
\end{theorem}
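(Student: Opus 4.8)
The plan is to combine the two preceding propositions with the dual representation and then invoke Theorem~\ref{CNS}. First I would fix $q\in{\cal A}$ and estimate $\mathbb E_q[e^{\beta(T-t)}\xi\,|\,{\cal F}_t]$. Writing the density of $\mathbb Q^q$ relative to $\mathbb P$ on ${\cal F}_T$ as $M_T^q/M_t^q$ conditional on ${\cal F}_t$, a Bayes-rule computation gives
$$\mathbb E_q[\xi\,|\,{\cal F}_t]=\mathbb E\!\left[\xi\,\exp\!\Big(\int_t^T q_s\,dW_s-\tfrac12\int_t^T|q_s|^2\,ds\Big)\Big|{\cal F}_t\right]\le \mathbb E\!\left[\xi\,e^{\int_t^T q_s\,dW_s}\Big|{\cal F}_t\right],$$
since $|q|\le\gamma$ makes the exponential of $-\tfrac12\int|q|^2$ at most $1$. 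Here I am using $\xi\ge0$.

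Next I would apply the first proposition with $x=\int_t^T q_s\,dW_s$ and $y=\xi$ to get the pointwise bound
$$\xi\,e^{\int_t^T q_s\,dW_s}\le \Phi_\lambda\!\big(e^{\int_t^T q_s\,dW_s}\big)+e^{2/\lambda}\,\Psi_\lambda(\xi).$$
Taking conditional expectation given ${\cal F}_t$ and using the second proposition to control $\mathbb E[\Phi_\lambda(e^{\int_t^T q_s\,dW_s})\,|\,{\cal F}_t]\le(1-\lambda\gamma^2(T-t))^{-1/2}$, I obtain
$$\mathbb E_q[\xi\,|\,{\cal F}_t]\le \frac{1}{\sqrt{1-\lambda\gamma^2(T-t)}}+e^{2/\lambda}\,\mathbb E[\Psi_\lambda(\xi)\,|\,{\cal F}_t].$$
Since this bound is uniform in $q\in{\cal A}$ and the right-hand side does not depend on $q$, I can take the essential supremum over $q$, multiply by $e^{\beta(T-t)}$, and add $\int_t^T e^{\beta(s-t)}\alpha_s\,ds$ to recover exactly the asserted inequality \eqref{estimateY} with the stated $\bar Y_t$.

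It remains to check that this $\bar Y$ is locally bounded, so that Theorem~\ref{CNS} applies and yields a nonnegative solution $(Y,Z)$ with $Y_t\le\bar Y_t$. The deterministic pieces $e^{\beta(T-t)}(1-\lambda\gamma^2(T-t))^{-1/2}$ and $\int_t^T e^{\beta(s-t)}\alpha_s\,ds$ are bounded on $[0,T]$ precisely because $\lambda<1/(\gamma^2 T)$ keeps $1-\lambda\gamma^2(T-t)$ bounded away from $0$ and $\alpha\in L^1(0,T)$; the only genuinely stochastic term is $\mathbb E[\Psi_\lambda(\xi)\,|\,{\cal F}_t]$, which is a nonnegative càdlàg (indeed continuous, by Brownian filtration) martingale, hence a.s.\ finite on $[0,T]$ and therefore a.s.\ bounded on the compact $[0,T]$ along its path — which is exactly what "locally bounded" requires via the stopping times $\tau_k=T\wedge\inf\{t:\bar Y_t>k\}$. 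This last verification is the only delicate point: one must make sure that the (random) time at which the martingale path could blow up does not occur before $T$, which is guaranteed by path continuity and finiteness of $\mathbb E[\Psi_\lambda(\xi)]<+\infty$, so that $\mathbb E[\Psi_\lambda(\xi)\,|\,{\cal F}_T]=\Psi_\lambda(\xi)<\infty$ a.s.\ and the supremum over $t\in[0,T]$ is a.s.\ finite by Doob's inequality. Invoking Theorem~\ref{CNS} then closes the argument.
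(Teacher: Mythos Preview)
Your proof is correct and follows essentially the same approach as the paper: bound $\mathbb E_q[\xi\,|\,{\cal F}_t]$ via the density estimate $M_T^q/M_t^q\le e^{\int_t^T q_s\,dW_s}$, apply the two propositions in sequence, take the essential supremum, and invoke Theorem~\ref{CNS}. The only difference is that you spell out the local boundedness of $\bar Y$ in more detail than the paper, which simply says ``the rest follows from Theorem~\ref{CNS}''; note that invoking Doob's inequality there is unnecessary, since a continuous (or even c\`adl\`ag) process on the compact interval $[0,T]$ automatically has a.s.\ finite supremum.
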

\begin{proof}
Applying the above two propositions, we deduce
\begin{eqnarray*}
\mathbb E_q[\xi|{\cal F}_t]&=&\mathbb E[M^q_T(M^q_t)^{-1}\xi|{\cal F}_t]\le \mathbb E\left[e^{\int_t^T q_sdW_s}\xi\Big| {\cal F}_t\right]
\\
&\le& \mathbb E[\Phi_\lambda( e^{\int_t^T q_sdW_s})|{\cal F}_t]+e^{\frac{2}{\lambda}}\mathbb E[\Psi_\lambda(\xi)|{\cal F}_t]\\
&\le &\frac{1}{\sqrt{1-\lambda\gamma^2(T-t)}}+e^{\frac{2}{\lambda}}\mathbb E[\Psi_\lambda(\xi)|{\cal F}_t].
\end{eqnarray*}
 Then we get (\ref{estimateY}) and the rest follows from Theorem 
\ref{CNS}.
\end{proof}

\section{General Case}
Consider the following BSDE:
\begin{equation}\label{bsdez2}
Y_t=\xi+\int_t^T f(s,Y_s,Z_s)ds-\int_t^T Z_sdW_s,
\end{equation}
where $f$ satisfies
\begin{equation}\label{lineargrowth}|f(s,y,z)|\le \alpha_s +\beta |y|+\gamma |z|,
\end{equation}
with $\alpha\in L^1(0,T),\beta\ge 0$ and $\gamma>0$.

\begin{theorem} Let $f$ be a generator which is continuous with respect to $(y,z)$ and verifies (\ref{lineargrowth}), and
$\xi$ be a terminal condition.
Let us suppose that there exists $\lambda\in (0,\frac{1}{\gamma^2{T}})$ such that
$$\mathbb E[\Psi_\lambda(|\xi|)]<+\infty.$$
Then BSDE (\ref{bsdez2}) admits a solution $(Y,Z)$ such that
$$|Y_t|\le e^{\beta(T-t)}\left(\frac{1}{\sqrt{1-\lambda\gamma^2(T-t)}}+e^{\frac{2}{\lambda}}\mathbb E[\Psi_\lambda(|\xi|)|{\cal F}_t]\right)+\int_t^Te^{\beta(s-t)}\alpha_sds.  $$
\end{theorem}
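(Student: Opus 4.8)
The plan is to reduce the general linearly growing generator to the typical case treated in Theorem \ref{simple} by a comparison argument combined with the localization procedure already used in the proof of Theorem \ref{CNS}. First I would dispose of the a priori bound: since $|f(s,y,z)|\le\alpha_s+\beta|y|+\gamma|z|$, any solution $(Y,Z)$ of \eqref{bsdez2} is squeezed between the solutions $(\underline Y,\underline Z)$ and $(\overline Y,\overline Z)$ of the two ``extremal'' BSDEs with generators $\mp(\alpha_s+\beta|y|+\gamma|z|)$ and terminal value $\pm|\xi|$ respectively. By Theorem \ref{simple} applied to $|\xi|$ (which is nonnegative and satisfies $\mathbb E[\Psi_\lambda(|\xi|)]<+\infty$), the upper BSDE admits a solution dominated by exactly the process appearing in the statement, and by symmetry the lower one is dominated in absolute value by the same process; this already pins down the claimed estimate once existence is established.

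The core of the argument is existence. I would mimic the localization scheme of Theorem \ref{CNS}. For each $n$, consider the truncated terminal value $\xi^n=(\xi\vee(-n))\wedge n$ and a truncation $f^n$ of the generator in $(y,z)$ (for instance composing $f$ with a smooth cut-off so that $f^n$ is bounded and still continuous); for bounded terminal data and a bounded continuous linearly growing generator, existence of a solution $(Y^n,Z^n)\in\mathcal S^2\times\mathcal M^2$ is classical. The comparison argument of the previous paragraph, applied to $\xi^n$ and $f^n$, gives the uniform bound $|Y^n_t|\le\bar Y_t$ where $\bar Y_t=e^{\beta(T-t)}\bigl(\frac{1}{\sqrt{1-\lambda\gamma^2(T-t)}}+e^{\frac{2}{\lambda}}\mathbb E[\Psi_\lambda(|\xi|)\,|\,\mathcal F_t]\bigr)+\int_t^Te^{\beta(s-t)}\alpha_s\,ds$, which is finite and locally bounded. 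Introducing the stopping times $\tau_k=T\wedge\inf\{t\ge0:\bar Y_t>k\}$, the stopped processes $(Y^n_{\cdot\wedge\tau_k},Z^n\mathbf 1_{\cdot\le\tau_k})$ solve BSDEs with terminal data bounded by $k$ and with the (locally) bounded generator $\mathbf 1_{s\le\tau_k}f^n$; one then passes to the limit in $n$ using a stability result for BSDEs with bounded terminal condition and linearly growing generator — here, because the generator is only continuous, not Lipschitz, the right tool is the monotone-stability / a.s.-convergence argument for continuous generators (e.g. the approach of Lepeltier--San Martin, or the version quoted in \cite{BH06}), extracting a subsequence of $Z^n$ converging in $\mathcal M^2$ on each $[0,\tau_k]$. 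Finally one glues the resulting $(Y_k,Z_k)$ across $k$ exactly as at the end of the proof of Theorem \ref{CNS}, using $\tau_k\uparrow T$, to obtain $(Y,Z)$ on $[0,T]$.

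The main obstacle I expect is precisely this passage to the limit in $n$ for a merely continuous generator: unlike the typical case of Section 2, where $f(t,y,z)=\alpha_t+\beta y+\gamma|z|$ is convex and one has the clean dual representation, here one cannot invoke the comparison theorem to get monotonicity of $Y^n$ in $n$, so the convergence of the truncated solutions must be obtained by a compactness/diagonal argument. The standard remedy is to first replace $f$ by its sup- and inf-convolutions in $(y,z)$ to produce a monotone sequence of Lipschitz generators sandwiching $f$, solve those BSDEs, and use the uniform bound $|Y^n_t|\le\bar Y_t$ together with the locally bounded structure to pass to the limit; the uniform integrability needed for the limit is exactly what the $\Psi_\lambda$-integrability of $|\xi|$ delivers, through the bound on $\bar Y$. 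Once this limiting step is in place, continuity of the paths of $Y$, the $L^2$ property of $Z$ on each $[0,\tau_k]$, and the $L^1$ property of $s\mapsto f(s,Y_s,Z_s)$ follow from the bounds, and the a priori estimate stated in the theorem is inherited from the estimate on $\bar Y$.
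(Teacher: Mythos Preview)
Your overall strategy---uniform a priori bound via comparison with the typical generator $\bar f(s,y,z)=\alpha_s+\beta y+\gamma|z|$, then localization---matches the paper's, but the passage to the limit has a genuine gap, and the obstacle is not where you locate it. Your truncation $\xi^n=(\xi\vee(-n))\wedge n$ is itself not monotone in $n$: it increases on $\{\xi>0\}$ and decreases on $\{\xi<0\}$. Hence even if $f$ were Lipschitz the sequence $Y^n$ would fail to be monotone, and the sup/inf-convolution of $f$ that you propose as a remedy does nothing to cure this. A bare compactness/diagonal extraction is equally problematic: weak $\mathcal M^2$-convergence of a subsequence of $Z^n$ does not let you pass to the limit in the nonlinear term $f(s,Y^n_s,Z^n_s)$, and there is no obvious source of strong convergence without monotonicity.

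The paper's missing ingredient is a \emph{double} truncation of the terminal value: set $\xi^{n,p}=\xi^+\wedge n-\xi^-\wedge p$. Then $\xi^{n,p}$ is nondecreasing in $n$ and nonincreasing in $p$, and the corresponding solutions $Y^{n,p}$ of BSDE $(\xi^{n,p},f)$ inherit the same monotonicity by comparison; the uniform bound $|Y^{n,p}_t|\le\bar Y_t$ follows as in your first paragraph via comparison with the Lipschitz generator $\bar f$ and terminal value $|\xi^{n,p}|$. After localizing with $\tau_k$ one has a doubly monotone family bounded by $k$ and applies the monotone-stability result of \cite{BH06} twice, taking $Y_k=\inf_p\sup_n Y_k^{n,p}$; gluing across $k$ is then exactly as you describe. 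Your intuition about sup/inf-convolution is not wasted: since $f$ is only continuous, a fully rigorous justification of the monotonicity $Y^{n,p}\le Y^{n+1,p}$ (same generator, different bounded terminal data) and of the existence of $Y^{n,p}$ itself ultimately rests on approximating $f$ by Lipschitz functions. But that device lives \emph{inside} the double-index scheme; it cannot replace it.
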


\begin{proof}
Let us fix $n\in \mathbb N^*$ and $p\in \mathbb N^*$ and set $\xi^{n,p}=\xi^+\wedge n-\xi^-\wedge p$.
Let $(Y^{n,p},Z^{n,p})$ be the unique solution in ${\cal S}^2\times {\cal M}^2$ of the BSDE $(|\xi^{n,p}|,f)$.
Set
$$\bar{f}(s,y,z)= \alpha_s +\beta y+\gamma |z|,$$
and
$(\bar{Y}^{n,p},\bar{Z}^{n,p})$ be the unique solution in ${\cal S}^2\times {\cal M}^2$ of the BSDE $(|\xi^{n,p}|,\bar{f})$.

By comparison theorem, 
$$|Y^{n,p}_t|\le |{\bar Y}^{n,p}_t|.$$
Setting $q_s^{n,p}=\gamma \ {\mbox sgn}(Z_s^{n,p})$, we obtain,
\begin{eqnarray*}
|Y^{n,p}_t|&\le& |{\bar Y}^{n,p}_t|\\
&=& \mathbb E_{q^{n,p}}\left[e^{\beta(T-t)}|\xi^{n,p}|\Big|{\cal F}_t\right]+\int_t^Te^{\beta(s-t)}\alpha_sds.\\
\end{eqnarray*}
From inequality (\ref{estimateY}),
$$|Y^{n,p}_t|\le \bar{Y}_t,$$
with
$$\bar{Y}_t=e^{\beta(T-t)}\left(\frac{1}{\sqrt{1-\lambda\gamma^2(T-t)}}+e^{\frac{2}{\lambda}}\mathbb E\left[\Psi_\lambda(|\xi|)\Big|{\cal F}_t\right]\right)+\int_t^Te^{\beta(s-t)}\alpha_sds.$$
Moreover, $Y^{n,p}$ is nondecreasing with respect to $n$, and nonincreasing with respect to $p$. 
Once again, we apply the localization method as follows to conclude the existence of solution.

Set
$$\tau_k=T\wedge \inf\{t\ge 0: \bar{Y}_t>k\},$$
and 
$$Y_k^{n,p}(t)=Y_{t\wedge \tau_k}^{n,p},\quad Z_k^{n,p}(t)=Z_t^{n,p}{\bf 1}_{t\le\tau_k}.$$
Then $(Y_k^{n,p},Z_k^{n,p})$ satisfies
\begin{equation}\label{bsdeknp}
Y_k^{n,p}(t)=Y_k^{n,p}(T)+\int_t^T{\bf 1}_{s\le\tau_k} f(s,Y_k^{n,p}(s),Z_k^{n,p}(s))ds-\int_t^T Z_k^{n,p}(s)dW_s.
\end{equation}

For fixed $k$, $Y_k^{n,p}$ is nondecreasing with respect to $n$ and nonincreasing with respect to $p$, and remains bounded by $k$.
We can now apply the stability property of BSDEs with bounded terminal data.
Setting $Y_k(t)=\inf_p\sup_n Y_k^{n,p}$, there exists $Z_k$ in ${\cal M}^2$ such that $\lim_p\lim_n Z_k^{n,p}=Z_k$ in ${\cal M}^2$ and
\begin{equation}\label{bsdek2}
Y_k(t)=\inf_p\sup_n Y_{\tau_k}^{n,p}+\int_t^{\tau_k}f(s, Y_k(s),Z_k(s))ds-\int_t^{\tau_k} Z_k(s)dW_s.
\end{equation}

Finally, noting that
$$Y_{k+1}(t\wedge\tau_k)=Y_k(t\wedge\tau_k),\quad Z_{k+1}{\bf 1}_{t\le\tau_k}=Z_k{\bf 1}_{t\le\tau_k},$$
we conclude the existence of solution $(Y,Z)$.

\end{proof}

\end{document}